\documentclass[11pt]{amsart}

\usepackage[margin=1.2in]{geometry}
\usepackage{amsmath,amssymb}
\usepackage[colorlinks]{hyperref}

\newcommand{\chiB}{\chi_{\mathrm{B}}}

\newcommand{\chimu}{\chi_{\mu}}
\newcommand{\chiM}{\chi_{\mathrm{M}}}
\newcommand{\Z}{\mathbb{Z}}
\newcommand{\N}{\mathbb{N}}

\newtheorem{theorem}{Theorem}
\newtheorem{lemma}[theorem]{Lemma}
\theoremstyle{remark}
\newtheorem{remark}[theorem]{Remark}

\title{The measure half of the $2n+1$ problem}

\author{Jos\'e de Jes\'us Pelayo G\'omez}
\thanks{Independent researcher. Email: \texttt{pelayuss@gmail.com}.}

\date{July 20, 2026}

\subjclass[2020]{03E15; 05C15, 28A05}
\keywords{Borel chromatic number; measurable chromatic number;
descriptive graph combinatorics; graphs generated by functions}

\begin{document}

\begin{abstract}
Let $F_0,\dots,F_{n-1}$ be Borel functions on a standard Borel space
$X$ and let $G_F$ be the graph they generate. We prove that for every
Borel probability measure $\mu$ on $X$ there are a forward-invariant
$\mu$-conull Borel set $A\subseteq X$ and a proper Borel
$(2n+1)$-coloring of $G_F\restriction A$, and that consequently
$\chiM(G_F)\le2n+1$ in the total sense of Kechris and Marks. This
answers the measure half of \cite[Problem~5.14]{KM} positively, for
every $n$ and with the optimal constant; no invariance, no local
finiteness, and no finiteness of the Borel chromatic number are
assumed. The argument is elementary and specific to measure; the
corresponding Borel problem and the Baire-measurable half remain
open.
\end{abstract}

\maketitle

\section{Statement and context}

Throughout, $X$ is a standard Borel space, $n\ge1$, and
$F_0,\dots,F_{n-1}\colon X\to X$ are Borel functions. The graph $G_F$
generated by the family joins $x\ne y$ whenever $F_i(x)=y$ or
$F_i(y)=x$ for some $i<n$. For a Borel probability measure $\mu$ on
$X$, the \emph{$\mu$-measurable chromatic number} $\chimu(G_F)$ is the
least cardinality of a standard Borel space $Y$ for which there is a
$\mu$-measurable proper coloring $c\colon X\to Y$ --- measurable with
respect to the $\mu$-completion of the Borel $\sigma$-algebra, and
proper on all of $X$, the \emph{total} sense of \cite[p.~18]{KM} ---
and
$\chiM(G_F)=\sup_\mu\chimu(G_F)$.

Classically $\chi(G_F)\le2n+1$: the orientation $x\to F_i(x)$ has
out-degree at most $n$, so every finite subgraph has a vertex of total
degree at most $2n$, and greedy coloring and De Bruijn--Erd\H{o}s
compactness \cite{dBE} apply. The bound is attained: the rotational
family $F_i(x)=x+(i+1)\bmod(2n+1)$ on $\Z_{2n+1}$ generates the
complete graph $K_{2n+1}$ \cite[p.~48]{KM}. Kechris, Solecki, and
Todor\v{c}evi\'c asked whether $\chiB(G_F)<\aleph_0$ implies
$\chiB(G_F)\le2n+1$ \cite[Question~4.9]{KST}; by Marks's determinacy
method, the value $2n+1$ is attained even by acyclic graphs generated
by $n$ Borel bijections \cite{MarksJAMS}. Kechris and Marks raised
the measurable and Baire-measurable halves explicitly
\cite[Problem~5.14]{KM}: is $\chiM(G_F)\le2n+1$? Toward the measure
half, the known bounds are quadratic in general (Miller, Palamourdas;
\cite[Theorem~5.13(iv)]{KM}, \cite{Pal}) and $8$ for $n=3$
\cite[Theorem~5.13(iii)]{KM}; for locally finite graphs, Conley and
Miller obtained the bound $4n+1$ in the Baire-measurable setting, and
in the measurable setting when the connectedness equivalence relation
is moreover hyperfinite \cite{CM}. The optimal constant $2n+1$ was
previously known only in special cases:
for commuting families \cite[Corollary~3.22]{MP} and, very recently,
for probability-measure-preserving graphs generated by bounded-to-one
functions \cite{HST}. See also the survey \cite{MarksSurvey}.

Here we prove the measure half in full.

\begin{theorem}\label{thm:main}
Let $n\ge1$, let $X$ be a standard Borel space, and let
$F_0,\dots,F_{n-1}\colon X\to X$ be Borel functions. For every Borel
probability measure $\mu$ on $X$ there are a forward-invariant
$\mu$-conull Borel set $A\subseteq X$ and a Borel proper
$(2n+1)$-coloring of $G_F\restriction A$. Moreover
$\chimu(G_F)\le2n+1$ in the total sense of \cite[p.~18]{KM}, and
consequently
\[
   \chiM(G_{F_0,\dots,F_{n-1}})\le 2n+1 .
\]
Thus the measure half of \cite[Problem~5.14]{KM} has a positive
answer for every $n$.
\end{theorem}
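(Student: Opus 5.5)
We plan to build the coloring greedily along a Borel ordering of vertices in which every vertex has at most $2n$ previously colored neighbours, modulo a $\mu$-null set. First we reduce to the case where each $F_i$ has no fixed points, so $x\mapsto F_i(x)$ is a genuine edge. Let $T(x)=\{F_w(x): w\in n^{<\omega}\}$ be the forward orbit. Following the standard measure-theoretic approach, we split $X$ into two Borel pieces according to the structure of the forward orbits, handle each one, and then glue. The gluing uses that forward-invariant sets are closed under the operations we need.

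\textbf{Step 1: the finite and periodic part.} Let $P$ be the set of points whose forward orbit $T(x)$ is finite. This set is Borel and forward invariant. On $P$, the graph $G_F\restriction T(x)$ is finite with out-degree at most $n$ in the orientation, so every subgraph has a vertex of total degree at most $2n$. We repeatedly peel off the Borel set of points of minimal degree among the remaining points of the (countable, Borel-indexed) backward-forward component. Here the worry is that components can be uncountable through backward branching, since preimages are unbounded. We therefore color by a Borel procedure that only looks forward. We order each finite forward orbit canonically using a Borel linear order of $X$ and color $T(x)$ consistently. The difficulty is consistency across different $x$ sharing a cycle, which we resolve by first coloring the "cores" (the terminal strongly connected pieces). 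These are finite and Borel-selectable, since we can choose a $(2n+1)$-coloring of each finite core by the lexicographically least rule. We then extend backwards by levels: a point at distance $k$ from its core is colored after all of $F_0(x),\dots,F_{n-1}(x)$. The issue is that its back-neighbours at the same level also conflict. To handle this, at each level we use a Borel countable coloring of the intersection graph (Kechris--Solecki--Todor\v{c}evi\'c) of locally countable pieces. We then reduce it to $2n+1$ colors greedily, since each point sees at most $n$ forward neighbours and only finitely many relevant conflicts.

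\textbf{Step 2: the infinite part via measure.} On $X\setminus P$ we use $\mu$ essentially. We push $\mu$ forward to $\nu=\sum_w 2^{-|w|}n^{-|w|}(F_w)_*\mu$ (normalized), so that $\nu$ is quasi-invariant under forward images. For each $k$, we choose a Borel set $B_k$ that is a maximal $\nu$-a.e. independent set in the graph where $x\sim y$ if they are within distance $k$ in $G_F$. The main trick we aim for is an ordering by "height": we find a Borel function $h\colon A\to\N$ with $h(F_i(x))<h(x)$ for all $i$ off a null set, except along a set of edges we can orient by an auxiliary $\N$-coloring. Greedy coloring in increasing $h$ then gives each vertex at most $n$ forward neighbours already colored. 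For backward neighbours of the same height we need at most $n$ conflicts, which fails in general. This is exactly where a $4n+1$-type loss arises.

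\textbf{Main obstacle.} The hard step is bounding the number of backward neighbours colored before $x$ by $n$. The idea we would try first is a measure-counting argument. The set of $x$ with more than $n$ "early" backward neighbours has, by mass transport along the $n$ functions (each edge has exactly one tail), measure at most that of the rest. We would then iterate, peeling off a set of measure shrinking geometrically and discarding the null limit set. Finally we take $A$ to be the largest forward-invariant subset of the good set, which is conull by quasi-invariance of $\nu$. The total statement $\chimu\le 2n+1$ then follows by coloring the null complement arbitrarily: it is $\mu$-measurable, and properness on edges between $A$ and $X\setminus A$ is repaired by recoloring the null set greedily using the classical bound $\chi(G_F)\le 2n+1$.
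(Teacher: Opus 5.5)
Your proposal has a genuine gap exactly at the step you flag as the main obstacle, and the tools you sketch there cannot close it. A height $h$ with $h(F_i(x))<h(x)$ at every nontrivial image cannot hold on a conull set once the infinite-orbit part has positive measure. On the forward core of the good set, which is conull by nonsingularity, every forward path from $x$ has length at most $h(x)$, and this forces finite forward orbits. So the ordering must break on a non-negligible set of edges, and, as you note yourself, greedy coloring along it only gives a $4n+1$-type bound. Your proposed repair, counting ``early backward neighbours'', is not meaningful for a merely quasi-invariant measure. In-degrees can be uncountable and can sit on atoms of $\nu$ (take $F_0$ constant). No mass-transport inequality is available without a quantitative bound on the density of $\sum_i(F_i)_*\nu$ with respect to $\nu$. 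Your $\nu\propto\sum_k2^{-k}P_*^k\mu$, with $P_*=\frac1n\sum_i(F_i)_*$, gives only $\sum_i(F_i)_*\nu\le 2n\,\nu$, which is too weak. As it stands, Step~2 describes the difficulty rather than resolving it.

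Two ingredients are missing. First, take $\nu=(1-r)\sum_k r^kP_*^k\mu$ with $r>\frac n{n+1}$. Then $\mu\ll\nu$, every composition $F_w$ is $\nu$-nonsingular, and $\sum_i(F_i)_*\nu\le L\nu$ with $L=n/r<n+1$. Second, do not order the vertices at all; recolor iteratively instead. Start from any Borel coloring $c$. Pick, by Bernoulli thinning over a countable Borel coloring, a $G_F$-independent Borel set $C$ of bad sources with $\nu(C)\ge\gamma\,\nu(E(c))$. Each $x\in C$ has at least $n+1$ colors avoiding its images; among these, give $x$ the color $a$ minimizing the Radon--Nikodym density $h_a(x)$ of incoming $\nu$-mass from points of color $a$. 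Since $\sum_a h_a\le L$, the chosen density is at most $\kappa=L/(n+1)<1$. Hence the newly created bad sources have measure at most $\kappa\,\nu(C)$, and $\nu(E(c))$ decays geometrically. Borel--Cantelli then yields an a.e.\ stabilizing limit, proper on a $\nu$-conull set whose forward core is still conull. This treats finite and infinite forward orbits uniformly, so your Step~1 is unnecessary. As written it also fails: within a level defined by distance to the core the back-degree can be infinite, so a countable coloring cannot be greedily reduced to $2n+1$ colors. Finally, the total extension needs more than $\chi(G_F)\le 2n+1$. Each $x\notin A$ must also avoid the colors of its images in $A$. This requires the list version: every finite $T\subseteq X\setminus A$ has a vertex $x$ for which $\deg_T(x)$ plus the number of images of $x$ in $A$ is at most $2n$. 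That fact, together with compactness, gives the extension.
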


\begin{remark}[optimality and scope]
By the rotational example above, $\chiM=2n+1$ is attained already by
finite instances, so the constant is optimal. The content of
Theorem~\ref{thm:main} is definable uniformity: nothing is assumed
about the functions (fixed points, non-injectivity, and infinite
in-degree are allowed), about the measure (no quasi-invariance), or
about $\chiB(G_F)$.
\end{remark}

\begin{remark}[the subinvariant case]\label{rem:pmp}
If $\sum_{i<n}(F_i)_*\mu\le L\mu$ for some $L<n+1$ --- for instance
when every $F_i$ preserves $\mu$, with $L=n$ --- one may take
$\nu=\mu$: only the envelope lemma (Lemma~\ref{lem:envelope}) is then
unnecessary, and the rest of the proof applies verbatim.
\end{remark}

\begin{remark}[scope of the method]
The proof uses $\sigma$-additivity, Radon--Nikodym derivatives, and
Borel--Cantelli in an essential way. We do not know a corresponding
category argument; in particular, this note does not address the
Baire-measurable half of \cite[Problem~5.14]{KM} or the Borel problem
of \cite[Question~4.9]{KST}, both of which remain open.
\end{remark}

\section{The proof}

Fix $n$, $X$, and $F_0,\dots,F_{n-1}$ as above, and put $q=2n+1$.
Fixed points of the $F_i$ produce no edges, so an index $i$ is ignored
at $x$ whenever $F_i(x)=x$; \emph{image} always means a nontrivial
image $F_i(x)\ne x$. All measures are Borel measures on $X$, and
$(F)_*\lambda(B)=\lambda(F^{-1}(B))$ denotes the pushforward, a
monotone and positively linear operation on finite measures. For a
Borel map $c\colon X\to\Z_q$ let
\[
 E(c)=\bigl\{x:\exists i<n\
   [F_i(x)\ne x\ \text{ and }\ c(x)=c(F_i(x))]\bigr\}
\]
be its Borel set of \emph{bad sources}. Every monochromatic edge has
the form $\{x,F_i(x)\}$ with $x\in E(c)$, so $c$ is proper if and only
if $E(c)=\varnothing$; the same applies on any forward-invariant
subset.

\begin{lemma}[countable palette; essentially \cite{KST}]
\label{lem:countable}
$G_F$ admits a proper Borel coloring $\eta\colon X\to\N$.
\end{lemma}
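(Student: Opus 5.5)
This is the standard Kechris--Solecki--Todor\v{c}evi\'c argument. Fix a countable family $(U_k)_{k\in\N}$ of Borel sets separating points of $X$; for instance, realize $X$ as a Borel subset of $2^{\N}$ and let $U_k$ be the set of points whose $k$-th coordinate is $1$. The plan is to define $\eta(x)$ as the least $k$ such that $x$ lies in $U_k$ and none of its $G_F$-neighbours do, after refining the family so that such a $k$ always exists. The difficulty is that $x$ may have infinitely many in-neighbours $y$ with $F_i(y)=x$, so a single separating set cannot be expected to isolate $x$ from its whole neighbourhood. I therefore handle the forward and backward directions asymmetrically.

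First I treat out-neighbours. For each $x$, the set of nontrivial images $\{F_i(x):i<n,\ F_i(x)\ne x\}$ has at most $n$ elements. Enlarge the family $(U_k)$ to the countable family of all finite Boolean combinations of the $U_k$; it is still Borel and still separating. Then for each $x$ there is a set $V$ in this family with $x\in V$ and $F_i(x)\notin V$ for every $i<n$ with $F_i(x)\ne x$: take the intersection of one $U_k$ (or complement) separating $x$ from each image. Since the condition ``$x\in V$ and $F_i(x)\notin V$ for all $i$ with $F_i(x)\ne x$'' is Borel in $x$, the least index $\eta_0(x)$ of such a $V$ is a Borel function $X\to\N$.

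Next I handle an edge $\{x,y\}$ with $y=F_i(x)\ne x$. If $\eta_0(x)=\eta_0(y)=k$, then $x\in V_k$ while $y=F_i(x)\notin V_k$. On the other hand, $\eta_0(y)=k$ forces $y\in V_k$, a contradiction. Hence $\eta_0(x)\ne\eta_0(y)$ on every edge, because every edge has the form $\{x,F_i(x)\}$ for a nontrivial image. Composing with a Borel bijection of $\N$ is unnecessary, so $\eta=\eta_0$ is already a proper Borel coloring into $\N$.

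The only point needing care is the Borelness of $\eta_0$. The set
\[
\{x: x\in V_k\ \text{and}\ \forall i<n\,(F_i(x)=x\ \text{or}\ F_i(x)\notin V_k)\}
\]
is Borel, being a finite Boolean combination of preimages of Borel sets under Borel maps. The set where the least such $k$ equals $m$ is therefore Borel. Existence of some $k$ for every $x$ uses only the finiteness of the set of out-neighbours, not any in-degree bound, which is why the unbounded in-degree allowed in Theorem~\ref{thm:main} is harmless. I expect no real obstacle beyond this observation: that properness needs to be checked only from the source side of each edge.
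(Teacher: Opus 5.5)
Your proposal is correct and is essentially the paper's proof: you close a countable separating family under finite Boolean operations (which puts $X$ in it, covering points with no images), let $\eta(x)$ be the least index of a set containing $x$ but none of its at most $n$ nontrivial images, and check properness from the source side of each edge $\{x,F_i(x)\}$. Your remark that only finiteness of the out-images is used, so unbounded in-degree is harmless, is exactly why the paper's argument works in this generality.
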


\begin{proof}
Fix a countable algebra $\{A_k\}_{k\in\N}$ of Borel subsets of $X$,
containing $X$, closed under complements and finite intersections, and
separating points (for instance, the algebra generated by a countable
basis of a compatible Polish topology). Given $x$, its images form a
finite set disjoint from $\{x\}$; separation and closure under
complements and intersections provide a set in the algebra containing
$x$ and none of its images (the set $X$ itself when there are no
images). Let $\eta(x)$ be the least $k$ such that $x\in A_k$ and
$F_i(x)\notin A_k$ for every $i<n$ with $F_i(x)\ne x$. Each condition
is Borel, so $\eta$ is Borel. If $y=F_i(x)\ne x$ and
$\eta(x)=\eta(y)=k$, then $y\notin A_k$ by the choice at $x$, while
$y\in A_k$ by the choice at $y$ --- a contradiction.
\end{proof}

\begin{lemma}[thinning]\label{lem:thinning}
Let $\nu$ be a finite Borel measure on $X$ and let $p\in(0,1)$,
$\gamma=p(1-p)^n$. Every Borel set $E\subseteq X$ contains a
$G_F$-independent Borel set $C$ with $\nu(C)\ge\gamma\nu(E)$.
\end{lemma}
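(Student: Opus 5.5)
The plan is to use a random independent set, and to make the randomness Borel by attaching the coin flips to the colors of the countable coloring $\eta$ from Lemma~\ref{lem:countable} rather than to points of $X$. Let $\Omega=\{0,1\}^{\N}$ carry the product measure $P$ under which each coordinate is $1$ with probability $p$, independently. For $\omega\in\Omega$ define
\[
 C_\omega=\bigl\{x\in E:\ \omega(\eta(x))=1\ \text{ and }\ \omega(\eta(F_i(x)))=0\ \text{ for every } i<n \text{ with } F_i(x)\ne x\bigr\}.
\]
For each fixed $\omega$ this is a Borel set, since $\eta$ and the $F_i$ are Borel and the conditions are finitely many Borel conditions. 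Moreover, the set $\{(x,\omega):x\in C_\omega\}$ is Borel in $X\times\Omega$.

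First, $C_\omega$ is $G_F$-independent for every $\omega$. Every edge has the form $\{x,y\}$ with $y=F_i(x)\ne x$. Suppose both endpoints lay in $C_\omega$. Then $x\in C_\omega$ forces $\omega(\eta(y))=0$, while $y\in C_\omega$ forces $\omega(\eta(y))=1$, which is a contradiction. Note that this step does not even need $\eta$ to be proper.

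Second, we bound the probability that a fixed $x\in E$ lies in $C_\omega$. This is where properness of $\eta$ is used. Each nontrivial image $F_i(x)$ is adjacent to $x$, so $\eta(F_i(x))\ne\eta(x)$. Let $k$ be the number of distinct colors among the nontrivial images of $x$; then $k\le n$, and these colors are all different from $\eta(x)$. Two images may share a color, but that only lowers $k$. Since the coordinates of $\omega$ are independent,
\[
P(x\in C_\omega)=p(1-p)^k\ge p(1-p)^n=\gamma .
\]
By Tonelli applied to the Borel set $\{(x,\omega):x\in C_\omega\}$ and the finite measure $\nu\times P$,
\[
\int_\Omega\nu(C_\omega)\,dP(\omega)=\int_E P(x\in C_\omega)\,d\nu(x)\ge\gamma\nu(E).
\]
Hence some $\omega$ satisfies $\nu(C_\omega)\ge\gamma\nu(E)$, and we take $C=C_\omega$ for that $\omega$.

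The only delicate point is the derandomization: independent coins indexed by points of $X$ would not yield a Borel set. Indexing the coins by $\eta$-colors fixes this, and properness of $\eta$ is exactly what guarantees that $x$'s own coin is independent of the coins of its images.
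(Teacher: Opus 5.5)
Your proposal is correct and follows the paper's proof essentially verbatim: coins indexed by the colors of the countable Borel coloring $\eta$, the same set $C_\omega$, independence from the clash at the coordinate $\omega(\eta(F_i(x)))$, the bound $p(1-p)^n$ from properness of $\eta$, and Fubini--Tonelli to pick a good realization. Your two side remarks are accurate: independence of $C_\omega$ does not need properness of $\eta$, and the membership probability is exactly $p(1-p)^k$.
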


\begin{proof}
Let $\eta$ be as in Lemma~\ref{lem:countable}, let
$\beta$ be the Bernoulli$(p)$ product measure on $\Omega=2^{\N}$, and
for $\omega\in\Omega$ put
\[
 C_\omega=\bigl\{x\in E:\
   \omega_{\eta(x)}=1\ \text{and}\
   \omega_{\eta(F_i(x))}=0
   \text{ for every }i<n\text{ with }F_i(x)\ne x\bigr\}.
\]
The set $\{(x,\omega):x\in C_\omega\}\subseteq X\times\Omega$ is
Borel: the maps $(x,\omega)\mapsto\omega_{\eta(x)}$ and
$(x,\omega)\mapsto\omega_{\eta(F_i(x))}$ are Borel, and membership is
a finite Boolean combination of Borel conditions on their values; in
particular each $C_\omega$ is Borel. Each $C_\omega$ is independent:
any edge inside $C_\omega$ has the form $\{x,y\}$ with
$y=F_i(x)\ne x$, and membership of $x$ requires $\omega_{\eta(y)}=0$
while membership of $y$ requires $\omega_{\eta(y)}=1$. For fixed
$x\in E$, properness of $\eta$ makes $\eta(x)$ different from the
colors of the images of $x$, of which there are at most $n$ distinct
values; the corresponding coordinates of $\omega$ are independent, so
\[
   \beta(\{\omega:x\in C_\omega\})\ge p(1-p)^n=\gamma .
\]
By Fubini's theorem applied to $\nu\times\beta$,
\[
 \int\nu(C_\omega)\,d\beta(\omega)
 =\int_E\beta(\{\omega:x\in C_\omega\})\,d\nu(x)
 \ge\gamma\nu(E),
\]
so some realization $C=C_\omega$ satisfies
$\nu(C)\ge\gamma\nu(E)$.
\end{proof}

\begin{lemma}[one recoloring step]\label{lem:recoloring}
Let $\nu$ be a Borel probability measure on $X$ with
\begin{equation}\label{eq:subinvariant}
   \sum_{i<n}(F_i)_*\nu\le L\nu
   \qquad\text{for some }L<n+1,
\end{equation}
and put
\[
   \kappa=\frac{L}{n+1}<1,\qquad
   \gamma=\frac1{n+1}\Bigl(\frac{n}{n+1}\Bigr)^{n},\qquad
   \delta=\gamma(1-\kappa)>0 .
\]
For every Borel map $c\colon X\to\Z_q$ there is a Borel map
$c'\colon X\to\Z_q$ such that $\{c'\ne c\}$ is a $G_F$-independent
Borel subset of $E(c)$ and
\begin{equation}\label{eq:contraction}
   \nu(E(c'))\le(1-\delta)\,\nu(E(c)).
\end{equation}
\end{lemma}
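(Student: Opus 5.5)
The plan is to take one random recoloring of a large independent set of bad sources and bound its effect by averaging. First apply Lemma~\ref{lem:thinning} to $\nu$ and the Borel set $E=E(c)$ with $p=1/(n+1)$. Then $p(1-p)^n$ is exactly the $\gamma$ of the statement, so we get a $G_F$-independent Borel set $C\subseteq E(c)$ with $\nu(C)\ge\gamma\nu(E(c))$. The new coloring $c'$ will agree with $c$ off $C$. Hence $\{c'\ne c\}\subseteq C$ is automatically independent and contained in $E(c)$.

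\textbf{Choosing the new colors.} Fix $x\in C$. Its nontrivial images $F_i(x)$ lie outside $C$, by independence, so their colors are unchanged: $c'(F_i(x))=c(F_i(x))$. There are at most $n$ such colors, so at least $q-n=n+1$ colors in $\Z_q$ are \emph{allowed} at $x$. Let $a_0(x)<\dots<a_n(x)$ be the $n+1$ least allowed colors, each a Borel function of $x$. To make the choice Borel and randomizable:
\begin{itemize}
\item let $\eta$ be as in Lemma~\ref{lem:countable};
\item take $\omega\in\Z_{n+1}^{\N}$ with the uniform product measure $\beta$;
\item set $c'_\omega(x)=a_{\omega_{\eta(x)}}(x)$ for $x\in C$, and $c'_\omega=c$ off $C$.
\end{itemize}
As in the thinning lemma, $(x,\omega)\mapsto c'_\omega(x)$ is jointly Borel.

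\textbf{Counting bad sources.} We bound $E(c'_\omega)$ case by case.
\begin{itemize}
\item No point of $C$ is a bad source for $c'_\omega$, because every image of $x\in C$ keeps its old color, which $x$ avoids.
\item Let $y\notin C$. Its color is unchanged, as are the colors of its images outside $C$. So $y$ can be bad for $c'_\omega$ only if either $y\in E(c)\setminus C$, or $F_i(y)\in C$ for some $i$ with $c(y)=c'_\omega(F_i(y))$.
\end{itemize}
For fixed $y$ and $i$ with $F_i(y)=x\in C$, the color $c'_\omega(x)$ is uniform over $n+1$ distinct colors. Hence the probability that it equals the fixed value $c(y)$ is at most $1/(n+1)$. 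By Fubini, a union bound over $i$, and \eqref{eq:subinvariant},
\[
 \int \nu\bigl(E(c'_\omega)\bigr)\,d\beta(\omega)
 \le \nu(E(c))-\nu(C)+\frac{1}{n+1}\sum_{i<n}\nu\bigl(F_i^{-1}(C)\bigr)
 \le \nu(E(c))-(1-\kappa)\nu(C).
\]
Here $F_i(y)\ne y$ automatically when $F_i(y)\in C\not\ni y$. The right-hand side is at most $\nu(E(c))-\gamma(1-\kappa)\nu(E(c))=(1-\delta)\nu(E(c))$. Some $\omega$ therefore achieves at most the average, and we take $c'=c'_\omega$ for that $\omega$.

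\textbf{Main obstacle.} The delicate point is the in-neighbours. A recolored point may have infinitely many preimages, so we cannot avoid their colors deterministically. This is exactly why the count for new bad sources goes through pushforward mass and the subinvariance hypothesis $L<n+1$ rather than through degrees. The independence of $C$ is needed so that both the images' colors and $c(y)$ are fixed, which makes the $1/(n+1)$ averaging legitimate. The remaining issue is measurability of $a_k(x)$ and of the bad-source set as $\omega$ varies; this is routine, as in Lemma~\ref{lem:thinning}.
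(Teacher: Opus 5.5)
Your proof is correct. It shares the paper's skeleton: thinning with $p=1/(n+1)$, recoloring only the independent set $C\subseteq E(c)$ by allowed colors, and the inclusion $E(c')\subseteq(E(c)\setminus C)\cup N$. Where you differ is in how you bound the new bad sources $N$.

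The paper forms the in-flow measures $\rho_a(B)=\sum_{i<n}\nu(\{y:c(y)=a,\ F_i(y)\ne y,\ F_i(y)\in B\})$. It takes Radon--Nikodym densities $h_a=d\rho_a/d\nu$, normalized so that $\sum_a h_a\le L$ everywhere. At each $x\in C$ it then deterministically picks the allowed color minimizing $h_a(x)$. This choice minimizes the union-bound quantity $\int_C h_{a(x)}(x)\,d\nu(x)$ over all admissible recolorings, and it yields $\kappa\nu(C)$ directly, with no auxiliary probability space.

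You instead randomize uniformly over $n+1$ allowed colors and average. The hypothesis then enters only through the single inequality $\sum_{i<n}\nu(F_i^{-1}(C))\le L\nu(C)$, and no Radon--Nikodym derivatives are needed. That is more elementary, and it shows densities are dispensable at this step.

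You can also strip your randomization down. Only one-point marginals enter your estimate, so the $\eta$-indexed product measure is unnecessary. Put $c'_k=a_k$ on $C$ for a single global $k\in\{0,\dots,n\}$, and let $N_k^i=\{y:F_i(y)\in C,\ F_i(y)\ne y,\ c(y)=a_k(F_i(y))\}$. For fixed $i$ these sets are pairwise disjoint in $k$, because the colors $a_k(x)$ are distinct. Hence $\sum_{k}\sum_{i}\nu(N_k^i)\le\sum_i\nu(F_i^{-1}(C))\le L\nu(C)$. Pigeonhole then gives a $k$ with $\sum_i\nu(N_k^i)\le\kappa\nu(C)$, with no Fubini argument or joint measurability to verify.
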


\begin{proof}
For $a\in\Z_q$ define the finite Borel measure
\[
 \rho_a(B)=
 \sum_{i<n}\nu\bigl(\{y:c(y)=a,\ F_i(y)\ne y,\ F_i(y)\in B\}\bigr)
 \qquad(B\subseteq X\text{ Borel}).
\]
For every Borel $B$,
\[
 \sum_{a\in\Z_q}\rho_a(B)
 \le\sum_{i<n}\nu(F_i^{-1}(B))
 =\sum_{i<n}(F_i)_*\nu(B)
 \le L\nu(B)
\]
by \eqref{eq:subinvariant}; in particular $\rho_a\ll\nu$. Choose
nonnegative Borel versions $h_a$ of the Radon--Nikodym derivatives
$d\rho_a/d\nu$ and redefine them to be $0$ on the common $\nu$-null
Borel set where $\sum_a h_a>L$, so that
\begin{equation}\label{eq:density-sum}
   \sum_{a\in\Z_q}h_a(x)\le L
   \qquad\text{for every }x\in X,
\end{equation}
while the identities $\rho_a(B)=\int_B h_a\,d\nu$ persist.

Apply Lemma~\ref{lem:thinning} with $p=1/(n+1)$ to $E(c)$ and choose
an independent Borel $C\subseteq E(c)$ with
\begin{equation}\label{eq:C-large}
   \nu(C)\ge\gamma\nu(E(c)).
\end{equation}
For $x\in C$ let
\[
   \mathcal A(x)=
   \Z_q\setminus
   \{c(F_i(x)):i<n,\ F_i(x)\ne x\},
\]
the set of colors available at $x$; at most $n$ colors are excluded,
so $|\mathcal A(x)|\ge q-n=n+1$. Let $a(x)$ be the first element of
$\mathcal A(x)$, in the fixed order of $\Z_q$, minimizing $h_a(x)$
over $a\in\mathcal A(x)$; this is a Borel choice, and since the $h_a$
are nonnegative, \eqref{eq:density-sum} gives
\begin{equation}\label{eq:kappa}
   h_{a(x)}(x)
   \le\frac1{n+1}\sum_{a\in\mathcal A(x)}h_a(x)
   \le\frac{L}{n+1}
   =\kappa
   \qquad(x\in C).
\end{equation}
Define $c'=a$ on $C$ and $c'=c$ off $C$. Every point of $C$ changes
color: $x\in E(c)$ means that some image of $x$ carries $c(x)$, so
$c(x)\notin\mathcal A(x)$; thus $\{c'\ne c\}=C$.

We claim
\begin{equation}\label{eq:bad-inclusion}
   E(c')\subseteq(E(c)\setminus C)\cup N,
   \qquad
 N=\bigcup_{i<n}
 \{y:F_i(y)\ne y,\ F_i(y)\in C,\
             c(y)=a(F_i(y))\}.
\end{equation}
Indeed, no point of $C$ lies in $E(c')$: by independence no image of
$x\in C$ lies in $C$, so the images of $x$ keep their colors, and
$c'(x)=a(x)\in\mathcal A(x)$ avoids all of them. If $y\notin C$ and
$y\in E(c')$ via the index $i$, then $c(y)=c'(y)=c'(F_i(y))$ with
$F_i(y)\ne y$; if $F_i(y)\notin C$ this color equals $c(F_i(y))$ and
$y\in E(c)\setminus C$, while if $F_i(y)\in C$ it equals $a(F_i(y))$
and $y\in N$.

Partitioning according to the value $b=c(y)$, which on the $i$-th set
in \eqref{eq:bad-inclusion} equals $a(F_i(y))$, the union bound and
the definition of $\rho_b$ give
\begin{align*}
 \nu(N)
 &\le
 \sum_{i<n}\nu\bigl(
   \{y:F_i(y)\ne y,\ F_i(y)\in C,\ c(y)=a(F_i(y))\}\bigr)\\
 &=\sum_{b\in\Z_q}\sum_{i<n}\nu\bigl(
   \{y:c(y)=b,\ F_i(y)\ne y,\ F_i(y)\in C\cap\{a=b\}\}\bigr)\\
 &=\sum_{b\in\Z_q}
   \rho_b\bigl(C\cap\{x:a(x)=b\}\bigr)
 =\int_C h_{a(x)}(x)\,d\nu(x)
 \le\kappa\,\nu(C),
\end{align*}
the last inequality by \eqref{eq:kappa}. Combining with
\eqref{eq:bad-inclusion}, \eqref{eq:C-large}, and $C\subseteq E(c)$,
\[
 \nu(E(c'))
 \le \nu(E(c))-\nu(C)+\kappa\nu(C)
 \le\bigl(1-\gamma(1-\kappa)\bigr)\nu(E(c)),
\]
which is \eqref{eq:contraction}.
\end{proof}

\begin{lemma}[envelope]\label{lem:envelope}
Let $\mu$ be a Borel probability measure on $X$, let
$\frac{n}{n+1}<r<1$, and put
\[
   P_*\lambda=\frac1n\sum_{i<n}(F_i)_*\lambda,
   \qquad
   \nu=(1-r)\sum_{k=0}^{\infty}r^kP_*^k\mu .
\]
Then $\nu$ is a Borel probability measure with:
\begin{enumerate}
\item $\nu\ge(1-r)\mu$; in particular $\mu\ll\nu$;
\item $\sum_{i<n}(F_i)_*\nu\le L\nu$ with $L=\frac nr<n+1$;
\item more generally, every finite Borel measure $\lambda$ with
$\sum_{i<n}(F_i)_*\lambda\le L\lambda$ satisfies
$(F_w)_*\lambda\le L^{|w|}\lambda$ for every composition
$F_w=F_{w_1}\circ\dots\circ F_{w_k}$ of the $F_i$; in particular, by
(2), every such $F_w$ is $\nu$-nonsingular.
\end{enumerate}
\end{lemma}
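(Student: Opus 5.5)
The plan is to verify the three items directly from the definition of $\nu$. Everything rests on two facts about $P_*$: it is positively linear and monotone on finite measures, and it preserves total mass, since each $(F_i)_*$ does and $P_*$ is an average of $n$ of them. Consequently $P_*^k\mu$ is a probability measure for every $k$. The series defining $\nu$ is then a countable nonnegative combination of Borel measures, so it is a Borel measure by monotone convergence. Its total mass is $(1-r)\sum_k r^k=1$.

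For (1), I keep only the $k=0$ term, which gives $\nu\ge(1-r)\mu$ setwise. Absolute continuity $\mu\ll\nu$ follows immediately.

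For (2), I note that $\sum_{i<n}(F_i)_*=nP_*$. Since pushforward commutes with countable sums of measures (again by monotone convergence, evaluated on each Borel set), we get
\[
 \sum_{i<n}(F_i)_*\nu=n(1-r)\sum_{k\ge0}r^kP_*^{k+1}\mu
 =\frac nr(1-r)\sum_{k\ge1}r^{k}P_*^{k}\mu\le\frac nr\,\nu .
\]
The last step just adds the nonnegative $k=0$ term $(1-r)\mu$. Finally, $L=n/r<n+1$ is exactly the hypothesis $r>n/(n+1)$.

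For (3), I argue by induction on $|w|$. Since each summand is nonnegative, $\sum_i(F_i)_*\lambda\le L\lambda$ gives $(F_j)_*\lambda\le L\lambda$ for each single $j$. Now suppose $(F_v)_*\lambda\le L^{|v|}\lambda$. Then by monotonicity and linearity of pushforward,
\[
(F_j\circ F_v)_*\lambda=(F_j)_*(F_v)_*\lambda\le L^{|v|}(F_j)_*\lambda\le L^{|v|+1}\lambda .
\]
I have to keep track of the order of composition here, but the bound is symmetric, so either order works: one can also write $(F_v\circ F_j)_*\lambda=(F_v)_*(F_j)_*\lambda\le L(F_v)_*\lambda$ and apply the induction hypothesis. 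Applying this with $\lambda=\nu$, which is allowed by (2), gives $\nu(F_w^{-1}(B))\le L^{|w|}\nu(B)$. In particular $\nu(B)=0$ implies $\nu(F_w^{-1}(B))=0$, and this is the nonsingularity claimed in the sense used later: preimages of null sets are null.

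No step is a real obstacle. The only point needing care is justifying that pushforward commutes with the infinite series, which is routine countable additivity applied to each Borel set.
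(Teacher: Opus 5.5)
Your proposal is correct and follows the paper's proof essentially step for step. You use the $k=0$ term for (1), the index shift $\sum_{i<n}(F_i)_*\nu=nP_*\nu=\frac nr\bigl(\nu-(1-r)\mu\bigr)\le\frac nr\nu$ for (2), and for (3) an induction on $|w|$ that peels off one map and applies monotonicity of pushforward, exactly as the paper does.
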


\begin{proof}
Each $P_*^k\mu$ is a Borel probability measure and the weights
$(1-r)r^k$ sum to $1$, so $\nu$ is a Borel probability measure, and
(1) is the $k=0$ term. By monotone convergence, pushforward commutes
with the series, so
\[
   P_*\nu
   =(1-r)\sum_{k=0}^{\infty}r^kP_*^{k+1}\mu
   =\frac{\nu-(1-r)\mu}{r}
   \le\frac1r\,\nu,
\]
and $\sum_{i<n}(F_i)_*\nu=nP_*\nu\le L\nu$; moreover $L=n/r<n+1$
because $r>\frac{n}{n+1}$. For (3), each single
$(F_i)_*\lambda\le\sum_{j<n}(F_j)_*\lambda\le L\lambda$, and by
induction on the length of $w$, using monotonicity of the pushforward,
\[
 (F_{w_1}\circ F_{w_2}\circ\dots\circ F_{w_k})_*\lambda
 =(F_{w_1})_*\bigl((F_{w_2}\circ\dots\circ F_{w_k})_*\lambda\bigr)
 \le L^{k-1}(F_{w_1})_*\lambda
 \le L^{k}\lambda .
\]
Nonsingularity means $\lambda(B)=0\Rightarrow\lambda(F_w^{-1}(B))=0$,
immediate from $(F_w)_*\lambda\le L^{|w|}\lambda$.
\end{proof}

\begin{proof}[Proof of Theorem~\ref{thm:main}]
Fix $\mu$, fix $r\in(\frac n{n+1},1)$, and let $\nu$ and $L<n+1$ be
as in Lemma~\ref{lem:envelope}. (When
$\sum_i(F_i)_*\mu\le L\mu$ already holds with some $L<n+1$ --- for
instance when every $F_i$ preserves $\mu$, with $L=n$ --- take
$\nu=\mu$ instead; see Remark~\ref{rem:pmp}.)

\emph{Step 1: iteration and stabilization.}
Let $c_0\equiv0$ and let $c_{t+1}$ be obtained from $c_t$ by
Lemma~\ref{lem:recoloring}; write $E_t=E(c_t)$ and
$C_t=\{c_{t+1}\ne c_t\}\subseteq E_t$. The constants $\gamma$,
$\kappa$, $\delta$ do not depend on $t$, so \eqref{eq:contraction}
iterates to $\nu(E_t)\le(1-\delta)^t$, whence
\[
   \sum_{t\ge0}\nu(E_t)<\infty,
   \qquad
   \sum_{t\ge0}\nu(C_t)<\infty .
\]
By Borel--Cantelli, $\nu$-almost every $x$ lies in only finitely many
$E_t$, hence in only finitely many $C_t$, so $(c_t(x))_t$ changes only
finitely often. Let
\[
 S=\bigl\{x:(c_t(x))_t\ \text{is eventually constant and}\
                  x\notin E_t\ \text{for all sufficiently large }t\bigr\},
\]
a $\nu$-conull Borel set, and let $c_\infty(x)=\lim_tc_t(x)$ on $S$;
this map is Borel, since for every $a\in\Z_q$
\[
 \{x\in S:c_\infty(x)=a\}
 =S\cap\bigcup_{N\in\N}\bigcap_{t\ge N}\{x:c_t(x)=a\}.
\]
If
$x,F_i(x)\in S$ with $F_i(x)\ne x$ and
$c_\infty(x)=c_\infty(F_i(x))$, then after both sequences stabilize
$x\in E_t$ for every large $t$, contradicting $x\in S$; so
$c_\infty$ is proper on $G_F\restriction S$.

\emph{Step 2: the forward core.}
Let $(F_w)_{w\in M}$ enumerate the countable monoid generated by the
$F_i$ (including the identity) and put
\[
   A=\bigcap_{w\in M}F_w^{-1}(S)\subseteq S .
\]
By Lemma~\ref{lem:envelope}(3) each $F_w^{-1}(S)$ is $\nu$-conull, so
$A$ is a Borel $\nu$-conull set; since $\mu\le(1-r)^{-1}\nu$ by
Lemma~\ref{lem:envelope}(1), $A$ is also $\mu$-conull. It is
forward-invariant by construction, and
$c_\infty\restriction A$ is a Borel proper $q$-coloring of
$G_F\restriction A$, proving the first assertion of the theorem.

\emph{Step 3: total measurable extension.}
For $x\in X\setminus A$ let $O(x)$ be the set of images of $x$ lying
in $A$ and
\[
 \mathcal L(x)=\Z_q\setminus c_\infty[O(x)],
 \qquad
 |\mathcal L(x)|\ge q-|O(x)|\ge q-n=n+1 .
\]
Forward-invariance of $A$ implies that every edge crossing between
$A$ and its complement originates in $X\setminus A$; hence any proper
coloring $\varphi$ of $G_F\restriction(X\setminus A)$ with
$\varphi(x)\in\mathcal L(x)$ combines with $c_\infty\restriction A$
to a proper coloring of all of $X$.

Such a $\varphi$ exists. First, every finite
$T\subseteq X\setminus A$ contains a vertex $x$ with
$\deg_T(x)+|O(x)|\le2n$, where $\deg_T$ is the degree in
$G_F\restriction T$: writing $\operatorname{out}_T(x)$ for the set of
images of $x$ in $T$, each edge of $G_F\restriction T$ is an
out-image realization for at least one of its endpoints and
contributes $2$ to $\sum_{x\in T}\deg_T(x)$, so
\[
 \sum_{x\in T}
 \bigl(\deg_T(x)+|O(x)|\bigr)
 \le
 2\sum_{x\in T}|\operatorname{out}_T(x)|
 +\sum_{x\in T}|O(x)|
 \le 2n|T|,
\]
since the images of $x$ in $T$ and in $A$ are distinct points, at
most $n$ in all. Removing such vertices successively and coloring
them in reverse order --- at its turn, $x$ must avoid at most
$\deg_T(x)<|\mathcal L(x)|$ colors inside $\mathcal L(x)$ --- gives a
proper coloring of $G_F\restriction T$ from the lists. Now the space
$\prod_{x\in X\setminus A}\mathcal L(x)$ is compact by Tychonoff's
theorem, and for finite $T$ the set $K_T$ of assignments proper on
$G_F\restriction T$ is closed (it constrains finitely many
coordinates) and nonempty; since
$K_{T\cup T'}\subseteq K_T\cap K_{T'}$, the family has the finite
intersection property, and any $\varphi\in\bigcap_TK_T$ is proper on
$G_F\restriction(X\setminus A)$, in the spirit of \cite{dBE}.

Let $\bar c$ be $c_\infty$ on $A$ and $\varphi$ on $X\setminus A$.
Each color class
$\bar c^{-1}(b)=(c_\infty^{-1}(b)\cap A)\cup\varphi^{-1}(b)$ is a
Borel set modified inside the $\mu$-null set $X\setminus A$, hence
$\mu$-measurable. Thus $\bar c$ is a total, $\mu$-measurable, proper
$q$-coloring, so $\chimu(G_F)\le q=2n+1$. Since $\mu$ was arbitrary,
$\chiM(G_F)\le2n+1$.
\end{proof}

\begin{remark}
The proof gives more than stated: the coloring on the conull core is
the pointwise limit of a sequence of Borel recolorings, the
bad sets decay exponentially, and the constants depend only on $n$
and on the choice of $r$.
\end{remark}



\begin{thebibliography}{Ma16}

\bibitem[CM]{CM}
C.~T. Conley and B.~D. Miller,
\emph{A bound on measurable chromatic numbers of locally finite Borel
graphs}, Math. Res. Lett. \textbf{23} (2016), 1633--1644.

\bibitem[dBE]{dBE}
N.~G. de Bruijn and P.~Erd\H{o}s,
\emph{A colour problem for infinite graphs and a problem in the theory
of relations}, Indag. Math. \textbf{13} (1951), 369--373.

\bibitem[HST]{HST}
C.~Higgins, P.~Spaas, and A.~Tenenbaum,
\emph{Spectral theory for Borel pmp graphs},
arXiv:2602.05185, 2026.

\bibitem[KM]{KM}
A.~S. Kechris and A.~S. Marks,
\emph{Descriptive graph combinatorics}, manuscript, 2020.
\url{https://math.berkeley.edu/~marks/papers/combinatorics20book.pdf}

\bibitem[KST]{KST}
A.~S. Kechris, S.~Solecki, and S.~Todor\v{c}evi\'c,
\emph{Borel chromatic numbers}, Adv. Math. \textbf{141} (1999), 1--44.

\bibitem[Ma16]{MarksJAMS}
A.~S. Marks,
\emph{A determinacy approach to Borel combinatorics},
J. Amer. Math. Soc. \textbf{29} (2016), 579--600.

\bibitem[Ma23]{MarksSurvey}
A.~S. Marks,
\emph{Measurable graph combinatorics}, in: \emph{Proceedings of the
International Congress of Mathematicians 2022}, Vol.~3, EMS Press,
2023, pp.~1488--1502.

\bibitem[MP]{MP}
C.~Meehan and K.~Palamourdas,
\emph{Borel chromatic numbers of graphs of commuting functions},
Fund. Math. \textbf{253} (2021), 219--237.

\bibitem[Pal]{Pal}
K.~Palamourdas,
\emph{$1,2,3,\dots,2n+1,\infty!$}, Ph.D. thesis, University of
California, Los Angeles, 2012.
\url{https://escholarship.org/uc/item/1075v66x}

\end{thebibliography}
\end{document}